\theoremstyle{definition}     
\newtheorem{defi}{Definition}[section]
\newtheorem{remark}[defi]{Remark}
\newtheorem{notation}[defi]{Notation}
\newtheorem{example}[defi]{Example}
\theoremstyle{plain}
\newtheorem{theorem}[defi]{Theorem}
\newtheorem{corollary}[defi]{Corollary}
\newtheorem{lemma}[defi]{Lemma}
\newtheorem{proposition}[defi]{Proposition}
\newtheorem{conjecture}[defi]{Conjecture}
\title{Models of torsors over affine spaces}
\author{Marco Antei}
\address{Universidad de Costa Rica, Ciudad universitaria Rodrigo Facio Brenes, Costa Rica.}
\email{marco.antei@ucr.ac.cr}
\author{Jorge A. Esquivel A.}
\address{Universidad de Costa Rica, Ciudad universitaria Rodrigo Facio Brenes, Costa Rica.}
\email{jorge.esquivelaraya@ucr.ac.cr}
\date{}
\begin{document}
\maketitle

\textbf{Abstract.} Let $X:=\mathbb{A}^{n}_{R}$ be the $n$-dimensional affine space over a discrete valuation ring $R$ with fraction field $K$. We prove that any pointed torsor $Y$ over $\mathbb{A}^{n}_{K}$ under the action of an affine finite type group scheme can be extended to a torsor over $\mathbb{A}^{n}_{R}$ possibly after pulling $Y$ back over an automorphism of $\mathbb{A}^{n}_{K}$. The proof is effective. Other cases, including $X=\alpha_{p,R}$, will also be discussed. 
\newline

\textbf{Mathematics Subject Classification. Primary: 14L30, 14L15. Secondary: 11G99.}\\\indent
\textbf{Key words:} torsors, affine group schemes, models.

\tableofcontents
\bigskip

\section{Introduction}
\subsection{Aim and scope}\label{sez:Aim}

Let $S$ be a Dedekind scheme of dimension one and $\eta=Spec(K)$ its generic point; let $X$ be a scheme, $f:X\to S$ a faithfully flat morphism of finite type and $f_{\eta}:X_{\eta}\to \eta$ its generic fiber. Assume we are given a finite $K$-group scheme $G$ and a $G$-torsor $Y\to X_{\eta}$. So far the \emph{problem of extending} the $G$-torsor $Y\to X_{\eta}$ has consisted in finding a finite and flat  $S$-group scheme $G'$ whose generic fibre is isomorphic to $G$ and  a $G'$-torsor $T\to X$ whose generic fibre is isomorphic to $Y\to X_{\eta}$ as a $G$-torsor. Some solutions, from Grothendieck's first ideas until nowadays, are known in some particular relevant cases and are the object of many classical and well known results and more recent papers, see for instance \cite[Expos\'e X]{SGA1}, \cite[\S 3]{Ray3},  \cite[\S 2.4]{Saidi}, \cite[Corollary 4.2.8]{Tos}, \cite{Antei2} and \cite{Antei4}. However a general solution does not exist. Moreover it is known that it can even happen that $G$ does not admit a finite and flat model (see for instance \cite{Mi2}). What is always true is that $G$ admits at least an affine, quasi-finite, flat $R$-group scheme model as an easy consequence of \cite[\S 3.4]{WW}. In this paper we study  the problem of extending torsors under the action of very general $G$, that is when $G$ is only affine and of finite type. This approach has been already used in \cite{AE} where it has been proved that, (at least when $X$ has dimension 2, the case $dim(X)>2$ having a different formulation for which we refer the reader to \cite{AE}) every torsor over $X_{\eta}$ under the action of an affine and flat group scheme can be extended to a torsor over $X$ up to a finite number of N\'eron blow up of $X$ at a closed subscheme of its special fiber. In this paper we focus essentially, but not only, on a precise example, the case when $X$ is the affine space $\mathbb{A}^n_R$, i.e. the $n$-dimensional affine space defined over a discrete valuation ring $R$. In this setting we are able to prove the following result (cf. Theorem \ref{TeoAffSpace} and Corollary \ref{CorAffSpa}): 

\begin{theorem}\label{Teo1}Let $X=\mathbb{A}^n_R$  be the $n$-dimensional affine space and $x=(0, ..., 0)$ its origin. Let $G$ be an affine $K$-group scheme of finite type and $f:Y\to X_{\eta}$ a $G$-torsor pointed in $y\in Y_{x_{\eta}}(K)$. Then, possibly after pulling back $Y$ over an automorphism of $\mathbb{A}^n_K$, there exist a $G'$-torsor $f':Y'\to \mathbb{A}^n_R$, pointed in $y'\in Y_{x}(R)$, extending the given $G$-torsor $Y$.
\end{theorem}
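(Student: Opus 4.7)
The plan is to combine an extension of $G$ to an affine flat $R$-group scheme $G'$ with the iterated N\'eron blowup technique of \cite{AE}, exploiting a particular geometric feature of affine space: the N\'eron blowup (dilatation) of $\mathbb{A}^n_R$ along the origin of its special fibre $\mathbb{A}^n_k$ is canonically isomorphic to $\mathbb{A}^n_R$, with blowup morphism $\mathrm{Spec}(R[y_1,\ldots,y_n]) \to \mathrm{Spec}(R[x_1,\ldots,x_n])$ given by $x_i \mapsto \pi y_i$, whose generic fibre is the automorphism $(y_i) \mapsto (\pi y_i)$ of $\mathbb{A}^n_K$. Iterated composition of such dilatations at the origin therefore produces exactly the type of object allowed by the statement: an automorphism of $\mathbb{A}^n_K$.

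First, I would apply \cite[\S 3.4]{WW} to obtain an affine flat $R$-group scheme $G'$ with $G'_K \simeq G$. Writing $Y = \mathrm{Spec}(A)$ for $A$ a finitely generated $K[x_1,\ldots,x_n]$-algebra with coaction $\rho$, I fix a presentation $A = K[x_1,\ldots,x_n][t_1,\ldots,t_m]/I$ and use the pointing $y$ to translate the generators so that $y$ corresponds to the augmentation sending every $x_i$ and every $t_j$ to $0$. Clearing denominators in the defining relations and in the coaction formulas $\rho(t_j)$, I construct a flat finitely generated $R[x_1,\ldots,x_n]$-subalgebra $A_0 \subseteq A$ with $A_0 \otimes_R K = A$ on which $\rho$ restricts to $A_0 \to A_0 \otimes_R \mathcal{O}(G')$. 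This yields a preliminary flat affine $R$-model $T_0 = \mathrm{Spec}(A_0)$ of $Y$, equipped with a $G'$-action and a natural $R$-point $y'_0$ lifting $y$, but which need not be a $G'$-torsor.

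The failure of the torsor property is localized on the special fibre and by \cite{AE} can be repaired after finitely many N\'eron blowups of the base (possibly with a further refinement of $G'$). The key additional observation in the present situation is that, because the reduction of $y'_0$ lies at the origin of $\mathbb{A}^n_k$, the non-torsor locus necessarily passes through the origin; by restricting at each step to the dilatation at the origin, one obtains in the special fibre a strictly descending sequence of ``torsor defects'' which must terminate. At every step the base remains $\mathbb{A}^n_R$ via the dilatation isomorphism recalled above, and composing the resulting morphisms on generic fibres produces an automorphism $\phi$ of $\mathbb{A}^n_K$ such that $\phi^{*}Y$ admits a $G'$-torsor extension $Y' \to \mathbb{A}^n_R$, pointed by the extension $y'$ of $y'_0$.

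The technical heart of the proof, and the main obstacle, is to identify a numerical invariant of the $G'$-torsor defect that strictly decreases under dilatation at the origin. A natural candidate is the $R$-length of the cokernel of the structural map $A_0 \otimes_R \mathcal{O}(G') \to A_0 \otimes_{R[x_1,\ldots,x_n]} A_0$ (which is an isomorphism generically), suitably truncated along the special fibre. Verifying the strict decrease of this invariant under the dilatation at the origin, while simultaneously tracking the refinements of $G'$ needed so that the model makes the action an action of the same group scheme at every step, is the delicate part of the argument, and the place where the pointing assumption does the essential work.
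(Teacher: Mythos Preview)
Your high-level framework is the same as the paper's: perform N\'eron blowups of $\mathbb{A}^n_R$ at the origin of the special fibre, use that each such blowup is again $\mathbb{A}^n_R$ (with generic-fibre map the scaling $x_i\mapsto \pi x_i$), and thereby trade the blowups for an automorphism of $\mathbb{A}^n_K$. Where your proposal has a genuine gap is exactly the point you flag yourself: termination.

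The paper does not argue via a decreasing ``torsor-defect'' invariant, and I do not see how your suggested invariant can be made to work. The cokernel of $A_0\otimes_R\mathcal{O}(G')\to A_0\otimes_{R[x_1,\dots,x_n]}A_0$ is supported on the $n$-dimensional special fibre and will typically have infinite $R$-length; your phrase ``suitably truncated'' hides the whole difficulty. Moreover, your claim that the non-torsor locus must contain the origin is not justified: the fibre of your model $T_0$ over $0_R$ is some flat $R$-model of $G$, while your $G'$ (produced abstractly from \cite[\S3.4]{WW}) is another; if they happen to agree the defect avoids the origin, and blowing up there gains nothing. Relatedly, invoking \cite{AE} does not help: there the blowups are along subschemes dictated by subtorsors of the special fibre, not at a chosen point, so that result does not transfer to ``blowup only at $0_s$.''

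The paper's substitute for all of this is one concrete input you are missing: Quillen--Suslin. One fixes a closed embedding $G\hookrightarrow GL_{d,K}$; the contracted product $Y\times^G GL_{d,K}$ is a $GL_d$-torsor over $\mathbb{A}^n_K$, hence trivial by Quillen--Suslin. This realizes $Y$ as a closed subscheme of $GL_{d,\mathbb{A}^n_K}$ cut out by explicit polynomials $f_n$ with coefficients in $R$, and one takes $G'$ to be the closure of $G$ in $GL_{d,R}$ (equivalently, the fibre of the naive $R$-model over the origin), not an arbitrary WW model. Writing each $f_n=\alpha_*(f_n)(y_{ij})+\sum_l v_{nl}(y_{ij})\,g_{nl}(t_1,\dots,t_r)$ with $g_{nl}(0)=0$, the $e$-fold blowup at the origin is the substitution $t_\gamma\mapsto \pi^e t'_\gamma$; for $e$ large every $g_{nl}$ picks up a positive power of $\pi$, so on the special fibre only the $\alpha_*(f_n)$ survive and $Y''_s\simeq G''_s\times_k \mathbb{A}^n_k$ is the trivial torsor. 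Faithful flatness of $Y''\to\mathbb{A}^n_R$ then follows from the crit\`ere de platitude par fibres, and the torsor axioms are checked by the explicit $GL_d$-type formulas. This is what replaces your missing descent invariant.
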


This led us to formulate the following conjecture which we are not able to prove at the moment\footnote{Of course in characteristic 0 this statement is empty.}:

\begin{conjecture}\label{Conj1} Let $\pi^{\text{qf}}(\mathbb{A}^n_R, 0)$ denote the quasi-finite fundamental group scheme of $\mathbb{A}^n_R$ at its origin as defined in \cite{AEG}, then the following faithfully flat morphism
$$\pi(\mathbb{A}^n_K, 0)\to \pi^{\text{qf}}(\mathbb{A}^n_R, 0)\times_R K$$ is an isomorphism.
\end{conjecture}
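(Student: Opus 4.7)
The plan is to recast Conjecture \ref{Conj1} in Tannakian terms and try to derive it from Theorem \ref{Teo1}. Recall that $\pi(\mathbb{A}^n_K,0)$ pro-represents pointed finite torsors over $\mathbb{A}^n_K$ while $\pi^{\text{qf}}(\mathbb{A}^n_R,0)$ pro-represents pointed quasi-finite flat torsors over $\mathbb{A}^n_R$, and the map in the conjecture is induced by the generic fiber functor on these torsor categories. It is already faithfully flat because generic fiber produces finite torsors out of quasi-finite ones, realizing $\pi^{\text{qf}}(\mathbb{A}^n_R,0)\times_R K$ as a quotient of $\pi(\mathbb{A}^n_K,0)$. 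The conjecture is therefore equivalent to essential surjectivity: every pointed finite $G$-torsor $Y\to\mathbb{A}^n_K$ must be isomorphic to $Y'\times_R K$ for some pointed quasi-finite flat torsor $Y'\to\mathbb{A}^n_R$ under a model $G'$ of $G$.

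Given such a $Y$, Theorem \ref{Teo1} produces an automorphism $\sigma$ of $\mathbb{A}^n_K$ fixing the origin, a flat $R$-model $G'$ of $G$, and a pointed $G'$-torsor $Y'\to\mathbb{A}^n_R$ extending $\sigma^*Y$. To meet the quasi-finite condition imposed by $\pi^{\text{qf}}$, I would combine this with the existence of a quasi-finite flat model of $G$ recalled in the introduction (from \cite[\S 3.4]{WW}): either arrange the construction of Theorem \ref{Teo1} to output a quasi-finite $G'$ directly, or quotient $G'$ by a suitable flat closed subgroup scheme with trivial generic fiber. This reduces the conjecture to eliminating the twist by $\sigma$.

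Removing $\sigma$ is the main obstacle and is precisely why the statement remains conjectural. A $K$-polynomial automorphism of $\mathbb{A}^n_K$ fixing the origin need not come from an $R$-automorphism of $\mathbb{A}^n_R$, since its coefficients may involve arbitrary denominators in $K\setminus R$, so there is no obvious way to pull $Y'$ back along an $R$-lift of $\sigma^{-1}$ and thereby recover an extension of $Y$ itself. Two attacks seem natural: (i) refine the constructive proof of Theorem \ref{Teo1} to show that one may always take $\sigma\in\text{Aut}_R(\mathbb{A}^n_R,0)\subseteq\text{Aut}_K(\mathbb{A}^n_K,0)$, so that $\sigma$ lifts and the extension can be transported back to $Y$; (ii) perform N\'eron blow-ups along the closed fiber in the spirit of \cite{AE} to absorb the twist into the modification, and then descend the resulting torsor back to $\mathbb{A}^n_R$ using the flexibility of the pro-quasi-finite setting.

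I expect the hard step to be (i). The automorphism produced in Theorem \ref{Teo1} is built from the extension argument itself, and there is at present no reason to expect that, up to isomorphism of pointed $G$-torsors, one can always choose $\sigma$ with $R$-integral coefficients. Establishing this seems to require a finer analysis of how finite $K$-torsors over $\mathbb{A}^n_K$ degenerate along the special fiber of $\mathbb{A}^n_R$, probably via the structure theory of affine quasi-finite flat group schemes from \cite{AEG}.
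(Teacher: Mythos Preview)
The statement you are addressing is labelled a \emph{conjecture} in the paper, and the authors say explicitly that they are not able to prove it. There is therefore no proof in the paper to compare your proposal against, and your write-up is, appropriately, a strategy sketch rather than a proof; you yourself flag the step that remains open.

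Your reduction is correct: faithful flatness of the displayed map is given, and the map is an isomorphism precisely when every pointed finite torsor over $\mathbb{A}^n_K$ is the generic fibre of a pointed quasi-finite flat torsor over $\mathbb{A}^n_R$. Two remarks on the details. First, your worry about quasi-finiteness of $G'$ is unnecessary: the $G'$ produced in Lemma~\ref{lemEsisteModello} is the schematic closure of a finite $K$-group in $GL_{d,R}$, hence flat of finite type over $R$ with $0$-dimensional generic fibre, and for a flat finite-type morphism over a discrete valuation ring all fibres have the same dimension, so $G'$ is automatically quasi-finite. Second, the automorphism $\sigma$ in Corollary~\ref{CorAffSpa} is not mysterious: tracing through Lemma~\ref{lemAffSpa}, it is the scaling $x_i\mapsto \pi^m x_i$ on the generic fibre for some $m\geq 0$. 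This confirms your diagnosis of the obstacle, but it also shows that your strategy~(i) essentially asks that one may take $m=0$, i.e.\ that no N\'eron blow-up is needed in Lemma~\ref{lemEsisteModello}; that is a priori stronger than the conjecture and probably not the right line of attack.
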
   

This conjecture is known to be true if we replace $\mathbb{A}^n_R$ by an abelian scheme, or, more in general, for smooth projective schemes $X$ (with some extra assumptions) over $R$ provided we consider the abelianization $\pi^{\text{ab}}(X,x)$ of the fundamental group scheme $\pi(X,x)$ (cf. \cite{Antei2}).

Similar techniques show that an analog of Theorem \ref{Teo1} may indeed be stated for other interesting cases, for instance when $X=\alpha_{p,R}$. This has a particular interest because reduced scheme are often not studied in this contest. And this is nothing but a single not reduced point. This leads to a conjecture similar to \ref{Conj1} stated in terms of the pseudo-fundamental group scheme, as defined in \cite{AD}.

\indent \textbf{Acknowledgements} Marco Antei would like to thank Michel Emsalem and Arijit Dey for interesting discussions on the subject.

\subsection{Notations and conventions}
\label{sez:Conve}  Let $S$ be any scheme, $X$ a $S$-scheme, $G$ an affine (faithfully) flat $S$-group scheme and $Y$ a $S$-scheme endowed with a right action $\sigma:Y\times G\to Y$. A $S$-morphism $p : Y  \to X$ is said to
be a $G$-torsor if it is affine, faithfully flat, $G$-invariant and the canonical morphism $(\sigma,pr_Y):Y\times G\to Y\times_X Y$ is an isomorphism. Let $H$ be a flat $S$-group scheme and $q:Z\to X$  a $H$-torsor; a morphism  between two such torsors is a pair $(\beta,\alpha):(Z,H)\to(Y,G)$ where $\alpha:H\to G$  is a $S$-morphism of group schemes, and $\beta:Z\to Y$ is a $X$-morphism of schemes such that  the following diagram commutes

$$\xymatrix{Z\times H \ar[r]^{\beta\times \alpha} \ar[d]_{H\text{-}action} & Y\times G\ar[d]^{G\text{-}action}\\ Z\ar[r]_{\beta} & Y}$$ (thus $Y$ is isomorphic to the contracted product $Z\times^H G$  through $\alpha$, cf. \cite{DG}, III, \S 4, 3.2). 
In this case we say that $Z$ precedes $Y$.
Assume moreover that $\alpha$ is a closed immersion. Then $t$ is a closed immersion too and we say that $Z$ is a subtorsor of $Y$ (or that $Z$ is contained in $Y$, or that $Y$ contains $Z$). 

Let  $q\in S$ be any point. For any $S$-scheme $T$ we will denote by $T_q$ the  fiber $T\times_S Spec(k(q))$ of $T$ over $q$. In a similar way for any $S$-morphism of schemes $v:T\to T'$ we will denote by $v_q:T_q\to T'_q$ the reduction of $v$ over $Spec(k(s))$.
When $S$ is irreducible  $\eta$ will denote its generic point and $K$ its function field $k(\eta)$. Any $S$-scheme whose generic fibre is isomorphic to $T_{\eta}$ will be called a model of $T_{\eta}$.   Furthermore when $v_{\eta}$ is an isomorphism we will often say that $v$ is a model map. When $S$ is the spectrum of a discrete valuation ring then $s\in S$ will always denote the special point.

Throughout the whole  paper a  morphism of schemes $f:Y\to X$ will be said to be quasi-finite if it is of finite type and for every point $x\in X$ the fiber $Y_x:=Y\times_X Spec(k(x))$ is a finite set. Let $S$ be any scheme and $G$ an affine $S$-group scheme. Then we say that $G$ is  a finite (resp. quasi-finite/ algebraic) $S$-group scheme if the structural morphism $G\to S$ is finite, (resp. quasi-finite/ of finite type).  

A $G$-torsor $f:Y\to X$ is said to be  finite (resp. quasi-finite/ algebraic) if $G$ is a flat $S$-group scheme which is moreover finite (resp. quasi-finite/ finite type)
Of course when $S$ is the spectrum of a field a $S$-group scheme is quasi-finite if and only if it is finite.

\section{N\'eron blow ups and applications}
\label{sez:blowTo}
\subsection{N\'eron blow ups of torsors}
In this section we recall the notions of N\'eron blow up and its applications in order to N\'eron blow up torsors. This technique in practice provides a useful tool to build new torsors from old ones. As an application we will use this construction to describe all the torsors (cf. Proposition \ref{propScopTorsQuFini}) under a particular quasi-finite group scheme with generic fibre of order $p$ and special fibre of order $1$, using the well known description for some  finite torsors of order $p$. Unless stated otherwise, from now till the end of section \ref{sez:blowTo} we only consider the following situation:

\begin{notation}\label{notaQF} We denote by $S$ the spectrum of a discrete valuation ring $R$  with uniformising element  $\pi$ and with fraction and residue field respectively denoted by $K$ and $k$. As usual $\eta$ and $s$ will denote  the generic and special point of $S$ respectively. Finally we denote by $X$ a faithfully flat $S$-scheme of finite type.
\end{notation}

Hereafter we recall a well known result  that will be used later:

\begin{proposition}\label{propNeronScoppio}  Let notations be as in \ref{notaQF}, let $C$ be a closed subscheme of the special fibre $X_s$ of $X$ and let $\mathcal{I}$ be the sheaf of ideals of $\mathcal{O}_X$ defining $C$. Let $X'\to X$ be the blow up of $X$ at $C$ and $u:X^C\to X$ denote its restriction to the open subscheme of $X'$ where $ \mathcal{I}\cdot\mathcal{O}_X$ is generated by $\pi$. Then:
\begin{enumerate}
\item $X^C$ is a flat $S$-scheme, $u$ is an affine model map. 
\item For any flat $S$-scheme $Z$ and for any $S$-morphism $v:Z\to X$ such that $v_k$ factors  through $C$, there exists a unique $S$-morphism $v':Z\to X^C$ such that $v=u\circ v'$.
\end{enumerate}
\end{proposition}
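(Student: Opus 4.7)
The plan is to work affine-locally on $X$ and carry out the blow-up construction explicitly. Write $X = \mathrm{Spec}(A)$ and choose generators of $\mathcal{I}$; since $C\subset X_s$, we have $\pi\in\mathcal{I}$, so we may pick $\mathcal{I}=(\pi,f_1,\dots,f_n)$. The blow-up $X'\to X$ is $\mathrm{Proj}$ of the Rees algebra $A[\mathcal{I}t]$, and the chart where $\mathcal{I}\cdot\mathcal{O}_{X'}$ is generated by $\pi$ is obtained by dehomogenizing with respect to the degree-one element $\pi t$. A direct computation shows that this chart is
$$X^C=\mathrm{Spec}\bigl(A[f_1/\pi,\dots,f_n/\pi]/(\pi\text{-torsion})\bigr),$$
which is manifestly affine over $X$, hence (1) will follow once the remaining properties are checked.

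For flatness, the explicit description of $X^C$ makes it $\pi$-torsion free by construction; since $R$ is a DVR this is equivalent to flatness of $X^C$ over $S$. To see that $u$ is a model map, note that on the generic fibre $\pi$ is a unit, so $\mathcal{I}_\eta=\mathcal{O}_{X_\eta}$ and the blow-up at the unit ideal is the identity; therefore $u_\eta\colon X^C_\eta\to X_\eta$ is an isomorphism. This finishes (1).

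For the universal property (2), let $v\colon Z\to X$ be a morphism from a flat $S$-scheme such that $v_k$ factors through $C$. The latter condition says exactly that $v^{-1}(\mathcal{I})\cdot \mathcal{O}_Z\subseteq \pi\mathcal{O}_Z$. Since $\pi\in \mathcal{I}$ one also has the reverse inclusion, so $v^*\mathcal{I}=\pi\mathcal{O}_Z$. By flatness of $Z$ over $S$, $\pi$ is a non-zero-divisor on $Z$, hence $v^*\mathcal{I}$ is an invertible ideal. The universal property of the usual blow-up (see e.g.\ \cite[II.7.14]{Hartshorne} or the corresponding statement in EGA) then produces a unique $S$-morphism $v'\colon Z\to X'$ lifting $v$, and because the pullback of $\mathcal{I}$ on $Z$ is generated by $\pi$, the image of $v'$ lies inside the open subscheme $X^C\subset X'$. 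This gives the desired unique factorisation $v=u\circ v'$.

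The main obstacle is essentially administrative rather than mathematical: one must check that the chart description of $X^C$ is intrinsic (independent of the choice of generators $f_1,\dots,f_n$) and that the formation of $X^C$ commutes with the affine-local choices we made, so that the global object and its universal property are well defined. Once that bookkeeping is in place, flatness and the factorisation statement both reduce to the single observation that $v^*\mathcal{I}=\pi\mathcal{O}_Z$ is invertible on any flat $S$-scheme mapping into $C$ on the special fibre.
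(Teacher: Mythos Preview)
Your argument is correct and is precisely the standard one: the paper does not give its own proof but simply cites \cite[\S 3.2, Proposition 1]{BLR} and \cite[II, 2.1.2 (A)]{ANA}, where the construction and verification proceed exactly as you describe (local chart $A[f_i/\pi]$ modulo $\pi$-torsion, flatness from $\pi$-torsion-freeness, and the factorisation via $v^*\mathcal{I}=\pi\mathcal{O}_Z$ being invertible). Your bookkeeping remark about independence of generators and gluing is handled in those references as well, so nothing is missing.
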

\begin{proof}Cf. \cite{BLR}, \S 3.2 Proposition 1 or \cite{ANA}, II, 2.1.2 (A).
\end{proof}

The morphism $X^C\to X$ (or simply $X^C$) as in Proposition \ref{propNeronScoppio} is called the N\'eron blow up of $X$ at $C$ and property 2 is often referred to as the universal property of the N\'eron blow up.

Now we are going to explain how to N\'eron blow up torsors:

\begin{lemma}\label{lemmaScoppioTorsori} Let  $G$ be an affine, algebraic and flat $S$-group scheme and $H$ a closed subgroup scheme of $G_s$. Let $Y$ be a $G$-torsor over $X$ and $Z$ a $H$-torsor over $X_s$, subtorsor of $Y_s\to X_s$. Then there exist a faithfully flat $S$-scheme of finite type $X'$, and a model map $\lambda: X'\to X$ such that $Y^Z\to X'$ is a $G^H$-torsor generically isomorphic to $Y_{\eta}\to X_{\eta}$. If moreover $G$ is quasi-finite then $\lambda$ can be obtained from $X$ after a finite number of N\'eron blow ups.
\end{lemma}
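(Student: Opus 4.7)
\emph{Plan.} The plan is to construct $G^H$, $Y^Z$, the base $X'$, and the action of $G^H$ on $Y^Z$ by repeated application of the universal property of the N\'eron blow up (Proposition \ref{propNeronScoppio}).

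First I apply the N\'eron blow up construction to $G$ along $H\subset G_s$, obtaining the flat affine $S$-scheme $u_G:G^H\to G$. Because $H$ is a subgroup of $G_s$, the multiplication $G\times_S G\to G$ sends $H\times_k H$ into $H$ on the special fibre; since $G^H\times_S G^H$ is flat over $S$ with special fibre mapping into $H\times_k H$, the composition $G^H\times_S G^H\to G\times_S G\to G$ has special fibre factoring through $H$ and thus, by the universal property, lifts uniquely to a morphism $G^H\times_S G^H\to G^H$. The analogous lifts of the inversion and the unit section, together with a formal check of the group axioms (which reduces, by uniqueness of such lifts, to the axioms on $G$), equip $G^H$ with the structure of a flat affine $S$-group scheme, and $u_G$ becomes a morphism of $S$-group schemes. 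Next I apply Proposition \ref{propNeronScoppio} to $Y$ along $Z\subset Y_s$ to obtain $u_Y:Y^Z\to Y$. Since $Z$ is an $H$-subtorsor of $Y_s$, the $G$-action $\sigma:Y\times_S G\to Y$ sends $Z\times_k H$ into $Z$ on special fibres; hence the composition $Y^Z\times_S G^H\to Y\times_S G\xrightarrow{\sigma} Y$ has special fibre factoring through $Z$ and lifts by the universal property to an action $\tilde\sigma:Y^Z\times_S G^H\to Y^Z$.

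Next I construct the base $X'$ and verify the torsor axioms. On the generic fibre, $Y^Z\to X$ restricts to the $G$-torsor $Y_\eta\to X_\eta$, so no modification of $X$ is required over $\eta$. On the special fibre, the canonical morphism
\begin{equation*}
\Psi:=(\tilde\sigma,\, pr_{Y^Z}):Y^Z\times_S G^H\longrightarrow Y^Z\times_X Y^Z
\end{equation*}
may fail to be an isomorphism, and my plan is to cure this defect by passing to a suitable model map $\lambda:X'\to X$ obtained from $X$ by N\'eron blowing up closed subschemes of its special fibre. Trivializing $Y\to X$ fppf-locally (for instance by $Y\to X$ itself) reduces the verification to the trivial torsor $X\times_S G\to X$, where $Z$ is cut out by a single $H$-reduction section of $G_s$; in this setting the comparison between the N\'eron blow up of $G$ along $H$ and the N\'eron blow up of $X\times_S G$ along $Z$ identifies explicitly the closed subscheme of $X_s$ along which one must blow up $X$ to make $\Psi$ an isomorphism. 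Fppf descent then produces $\lambda:X'\to X$ globally, and exhibits $X'$ as the (fppf) quotient $Y^Z/G^H$, a faithfully flat $S$-scheme of finite type whose generic fibre is $X_\eta$.

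The principal obstacle is the last step and, more specifically, showing in the quasi-finite case that only \emph{finitely many} N\'eron blow ups are required. When $G$ is quasi-finite, the special fibres of $G^H$ and of $Y^Z$ are zero-dimensional, so the locus in $X_s$ over which $\Psi$ fails to be an isomorphism is a proper closed subscheme. Each N\'eron blow up at such a locus strictly decreases a discrete numerical invariant attached to $\Psi$ (for instance the length of the cokernel of the comparison morphism along the special fibre), which forces the procedure to terminate after finitely many steps and exhibits $\lambda$ as a finite composition of N\'eron blow ups. In the general affine algebraic case, such termination is not claimed and $\lambda$ need only be a model map; this is precisely what the statement of the lemma allows.
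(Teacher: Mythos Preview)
The paper does not give a proof here; it simply cites \cite[Proposition~3.7]{AE}. So there is nothing internal to compare against, and your outline must stand on its own.

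Your first two steps are correct and standard: endowing $G^H$ with a group structure and lifting the action to $\tilde\sigma:Y^Z\times_S G^H\to Y^Z$ via the universal property of the N\'eron blow up is exactly how dilatations of group schemes and of equivariant schemes are handled (cf.\ \cite{BLR}, \S 7.2, or \cite{ANA}).

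The gaps begin with the construction of $X'$. You propose to trivialize $Y$ along $Y\to X$, identify ``explicitly'' the closed subscheme of $X_s$ to blow up, and then descend; you also describe $X'$ as the fppf quotient $Y^Z/G^H$. But none of this is carried out. After base change along $Y\to X$, the pulled-back $H$-subtorsor $Z\times_{X_s}Y_s\subset Y_s\times_k G_s$ is classified by a section $Y_s\to G_s/H$ which is in general nonconstant, so the N\'eron blow up of the trivialized torsor along it is not visibly a product with $G^H$, and you do not say what closed subscheme of $Y_s$ (let alone of $X_s$) this singles out, nor why the locally produced centres patch. You also do not verify that $Y^Z/G^H$ is representable by a scheme, faithfully flat and of finite type over $S$; these are precisely the properties the lemma asserts for $X'$.

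The quasi-finite finiteness claim is the substantive content of the lemma, and here your argument is too vague to count as a proof. The phrase ``length of the cokernel of the comparison morphism along the special fibre'' is not well defined (length over which local ring? $X_s$ is not a point, and $\Psi$ is a morphism of schemes, not of finitely generated modules), and you give no mechanism explaining why a N\'eron blow up of $X$ at a closed subscheme of $X_s$ should strictly decrease whatever invariant you intend. This step requires a genuine argument, not a placeholder.
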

\proof This is \cite[Proposition 3.7]{AE}. 
\endproof

The importance of the previous construction is that we can build new torsors from old ones. In order to use this construction we need the special fibre of our given torsor to properly contain some other torsors. This happens, for instance, when the special fibre is trivial, like in the following example: 

\begin{example}\label{esempTorsQuFini2}
Assume $R$ has positive characteristic $p$. Let $X:=Spec(R[x])$ be the affine line over $R$. Then  $$Y:=Spec(R[x,y]/(y^p-y-\pi x))$$ is a non trivial $(\mathbb{Z}/p\mathbb{Z})_R$-torsor (\cite{Mi}, III, Proposition 4.12), with special  fibre $$Y_s= Spec(k[x,y]/(y^p-y))$$ which is a  trivial $(\mathbb{Z}/p\mathbb{Z})_k$-torsor. It is then clear that  $X_s$ is a subtorsor of $Y_s$ and we can blow up $Y$ at $X_s$ following Lemma \ref{lemmaScoppioTorsori} thus getting  a $M$-torsor where $M$ is obtained after N\'eron blowing up $(\mathbb{Z}/p\mathbb{Z})_R$ at $\{1\}_k=Spec(k)$, closed subgroup scheme of $(\mathbb{Z}/p\mathbb{Z})_k$, so that $M=(\mathbb{Z}/p\mathbb{Z})_R^{{\{1\}}_k}=Spec(R[y]/(\pi^{p-1}y^p-y))$; indeed $M=Spec(R[M])$ where $R[M]:=R[x,\pi^{-1}x]/(x^p-x)=R[y]/(\pi^{p-1}y^p-y)$ where we have set $y=\pi^{-1}x$. It is flat as the N\'eron blowing up is always flat, quasi-finite, but clearly not finite. In a similar way $Y^{X_s}=Spec(R[x,y]/(\pi^{p-1}y^p-y-x))$  then we obtain a quasi-finite $M$-torsor. 

\end{example}

In a very similar way we obtain the description of $M$-torsors over an affine scheme: 

\begin{proposition}\label{propScopTorsQuFini}
Assume $R$ has positive characteristic $p$. Let $X:=Spec(A)$ be affine over $R$ with $X_s$ integral. Let $M:=Spec (R[x]/(\pi^{p-1}x^p-x))$ be the $R$-group scheme defined in Example \ref{esempTorsQuFini2}. Then any $M$-torsor over $X$ is isomorphic to a torsor of the form  $$Y:=Spec(A[y]/(\pi^{p-1}y^p-y+a))$$ for some $a\in A$.
\end{proposition}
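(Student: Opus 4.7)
My strategy is to realise $M$ as the kernel of the additive homomorphism $F_{\pi}\colon \mathbb{G}_{a,R}\to\mathbb{G}_{a,R}$, $t\mapsto \pi^{p-1}t^p-t$ (a group scheme homomorphism precisely because $R$ has characteristic $p$, making the $p$-th power additive), and then to extract a global coordinate on any $M$-torsor by trivialising the associated $\mathbb{G}_a$-torsor. Concretely, given an $M$-torsor $Y=\mathrm{Spec}(B)\to X$, I would form the contracted product $Z:=Y\wedge^M\mathbb{G}_{a,R}$ along $M\hookrightarrow \mathbb{G}_{a,R}$, obtaining a $\mathbb{G}_{a,X}$-torsor on $X$. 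Since $X$ is affine, $H^1(X,\mathbb{G}_{a,R})=H^1(X,\mathcal{O}_X)=0$, so $Z$ is trivial. By the universal property of the contracted product, a trivialising section $X\to Z$ corresponds to an $M$-equivariant $X$-morphism $Y\to \mathbb{G}_{a,R}$, where $M$ acts on $\mathbb{G}_{a,R}$ by translation through its embedding; algebraically this is an element $y\in B$ such that the coaction $\sigma^{\ast}\colon B\to B\otimes_R R[M]$ satisfies $\sigma^{\ast}(y)=y\otimes 1+1\otimes x$, where $x$ is the canonical coordinate on $M$.

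Setting $a:=y-\pi^{p-1}y^p\in B$, one then computes
\[
\sigma^{\ast}(F_{\pi}(y))=F_{\pi}(y\otimes 1+1\otimes x)=F_{\pi}(y)\otimes 1+1\otimes F_{\pi}(x)=F_{\pi}(y)\otimes 1,
\]
since $F_{\pi}$ is additive and $F_{\pi}(x)=0$ in $R[M]$. Hence $a=-F_{\pi}(y)$ is $M$-invariant and therefore lies in $B^M=A$ by the torsor property. The tautological $A$-algebra map $\phi\colon A[y]/(\pi^{p-1}y^p-y+a)\to B$ sending the abstract generator to the chosen $y$ is well defined. To see that $\phi$ is an isomorphism I would base change along the faithfully flat map $A\to B$: using the torsor isomorphism $B\otimes_A B\cong B\otimes_R R[M]$ and the substitution $w=y_{\mathrm{rhs}}-y_{\mathrm{lhs}}$, both sides of the base change become canonically $B[w]/(\pi^{p-1}w^p-w)$, and under these identifications $\phi\otimes \mathrm{id}_B$ becomes the identity map, whence $\phi$ is an isomorphism by faithfully flat descent.

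The main technical nuisance I expect is the unwinding of the contracted-product construction to produce an \emph{explicit} element $y\in B$ satisfying the prescribed coaction rule, rather than merely an abstract trivialisation of $Z$. The hypothesis that $X_s$ be integral is used to secure the $A$-flatness, and hence the fppf property, of the resulting presentation $A[y]/(\pi^{p-1}y^p-y+a)$: because the leading coefficient $\pi^{p-1}$ is a non-unit, flatness does not follow from monicity and must instead be verified via a $\pi$-torsion-free argument using that $A/\pi A$ is a domain, combined with a fiberwise flatness criterion applied to $A\to A[y]/(\pi^{p-1}y^p-y+a)\to R$.
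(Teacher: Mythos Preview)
Your argument is correct and takes a genuinely different route from the paper's. The paper pushes the given $M$-torsor forward along the N\'eron-blow-up map $M\to(\mathbb{Z}/p\mathbb{Z})_R$, obtaining a $(\mathbb{Z}/p\mathbb{Z})_R$-torsor $Z$ whose special fibre is automatically trivial (since $M_s$ is trivial); it then invokes the Artin--Schreier description to write $Z=\mathrm{Spec}(A[y]/(y^p-y+\pi a))$ and recovers $Y$ as the N\'eron blow-up of $Z$ along $X_s$, which yields the stated equation. Your approach instead pushes forward along the closed immersion $M\hookrightarrow\mathbb{G}_{a,R}$ and uses the vanishing $H^1(X,\mathbb{G}_a)=0$ to produce directly an element $y\in B$ with the prescribed coaction, after which a short descent computation identifies $B$ with $A[y]/(\pi^{p-1}y^p-y+a)$. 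This is more self-contained: it avoids both the Artin--Schreier classification and the N\'eron-blow-up machinery of Lemma~\ref{lemmaScoppioTorsori}, at the cost of doing the torsor isomorphism check by hand. One remark: your final paragraph about needing $X_s$ integral to secure flatness of $A[y]/(\pi^{p-1}y^p-y+a)$ is unnecessary in your own framework. Once you have established that $\phi\otimes_A B$ is an isomorphism and $A\to B$ is faithfully flat, $\phi$ is an isomorphism, and flatness of the presentation over $A$ is inherited from that of $B$; the integrality hypothesis on $X_s$ plays no role in your argument (it enters the paper's proof through the N\'eron-blow-up comparison).
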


\begin{proof}
As in Example \ref{esempTorsQuFini2}, if we start from any $(\mathbb{Z}/p\mathbb{Z})_R$-torsor 

$Spec(A[y]/(y^p-y+\pi a))$ and we N\'eron blow it up in $Spec(A_k)\hookrightarrow Y_s$ we obtain the equation $Spec(A[y]/(\pi^{p-1}y^p-y+a))$ which is a $M$-torsor. On the other hand if we start from a $M$-torsor $Y$ over $X$ then one can consider the contracted product $Y\times^M (\mathbb{Z}/p\mathbb{Z})_R$ which is a $(\mathbb{Z}/p\mathbb{Z})_R$-torsor $Z$ with trivial special fibre, so in particular $Y$ is easily seen to be the N\'eron blowing up of $Z$ in $X_s$, hence, as we have just observed, it is isomorphic to $Spec(A[y]/(\pi^{p-1}y^p-y+a))$.
\end{proof}

\section{Extension of torsors}

\label{sez:existence}

Unless stated otherwise, from now till the end of section \ref{sez:existence} we only consider the following situation:

\begin{notation}\label{notaEsiste}
Let $S$ be a \emph{trait}, i.e. the spectrum of a discrete valuation ring $R$ with uniformising element $\pi$, with fraction and residue field denoted by $K$ and $k$ respectively. We denote by $\eta$ and $s$ the generic and special point of $S$. 
\end{notation}

\begin{lemma}\label{lemEsisteModello}
Let notations be as in \ref{notaEsiste} where we assume $X=Spec(A)$ to be affine  and provided with a section $x\in X(R)$. Let $G$ be an affine $K$-group scheme of finite type, $Y=Spec(B)$ a $K$-scheme and $f:Y\to X_{\eta}$ a $G$-torsor pointed in $y\in Y(K)$ lying over $x_{\eta}$. We need the following  technical assumption:
\begin{itemize}
\item
 we fix an embedding $G\hookrightarrow GL_{d,K}$  and we consider the contracted product $Z:=Y\times^G GL_{d,K}$; we assume that $Z\to X_{\eta}$ is a trivial $GL_{d,K}$-torsor (i.e. $Z\simeq GL_{d,X_{\eta}}$).
\end{itemize}
Then there exist a $G'$-torsor $f':Y'\to  X'$ extending the given $G$-torsor $Y$, where   $G'$ is the closure of $G$ in $GL_{d,R}$ and $X'$ is obtained by $X$ after a finite number of N\'eron blow ups of $x_s\in X_s$.
\end{lemma}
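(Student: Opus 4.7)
Plan:

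I would first exploit the technical hypothesis to make the data explicit. The trivialization $Z = Y\times^G GL_{d,K}\cong GL_{d,X_\eta}$ yields a closed $G$-equivariant immersion $Y\hookrightarrow GL_{d,X_\eta}$. Using the freedom to translate the trivialization by an element of $GL_d(X_\eta)$, I would arrange it so that the marked section $y\in Y(K)$ corresponds to the identity $1\in GL_d(K)$ above $x_\eta$; under this normalization the fibre $Y_{x_\eta}$ is identified with $G\subset GL_{d,K}$, and the identity section of $GL_{d,R}$ composed with $x\in X(R)$ becomes the natural candidate for the required $R$-point $y'$.

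Next I would define $\overline{Y}$ to be the scheme-theoretic closure of $Y$ inside the $R$-flat scheme $GL_{d,X}:=GL_{d,R}\times_R X$. Then $\overline{Y}$ is a closed $R$-flat subscheme with generic fibre $Y$. Setting $G'\subset GL_{d,R}$ to be the closure of $G$, the multiplication map $G'\times_R\overline{Y}\to GL_{d,X}$ factors generically through $Y$ (because $G\cdot Y=Y$), and by $R$-flatness of its source its scheme-theoretic image lies in $\overline{Y}$; hence $G'$ acts on $\overline{Y}$. The fibre $x^*\overline{Y}\subset GL_{d,R}$ has generic fibre $G$ and therefore contains $G'$ as a generically dense piece, but it may a priori carry extra non-flat special-fibre components, which is the only source of trouble.

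The candidate morphism $\overline{Y}\to X$ is a $G$-torsor over $X_\eta$; to promote it to a genuine $G'$-torsor I would remove the excess special-fibre components by iterated Néron blow up of $X$ centered at $x_s$. Concretely, I would apply Lemma \ref{lemmaScoppioTorsori} repeatedly, starting from the trivial $GL_{d,R}$-torsor $GL_{d,X}\to X$ and using the natural $G'_s$-subtorsor $G'_s\times X_s\subset GL_{d,k}\times X_s$ of its special fibre. Each step replaces $X$ by a Néron blow up of $X$ at $x_s$ and progressively cuts the structure group down from $GL_{d,R}$ toward $G'$; the normalization $y=1$ and its extension to the identity $R$-section of $GL_{d,R}$ ensure that every blow up can indeed be taken at $x_s$, since the special-fibre components needing modification all sit above that single point. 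The final $Y'$ is then the pullback of $\overline{Y}$ to the resulting $X'$, and the canonical map $G'\times_R Y'\to Y'\times_{X'}Y'$, being an isomorphism generically and both sides being $R$-flat, becomes an isomorphism over $X'$ once the base has been sufficiently modified, so $Y'\to X'$ is a $G'$-torsor pointed at $y'$.

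The principal obstacle is the termination of this inductive procedure and the identification of the limit structure group with exactly $G'$ rather than with some intermediate Néron blow up of $GL_{d,R}$. I would handle termination by exhibiting a strictly decreasing numerical invariant of non-flatness (for example the length of the $\pi$-torsion in a suitable local stalk of $\overline{Y}$, or a ``Néron defect'' measuring how far $x^*\overline{Y}$ is from $G'$), bounded in terms of the codimension of $G$ in $GL_d$ and of the lengths of the relevant special-fibre components. The identification of the final structure group with $G'$ would come from combining the universal property of the scheme-theoretic closure defining $G'$ with the universal property of the Néron blow up, which characterizes flat $R$-schemes whose special fibre factors through $x_s$ — precisely the regime in which our construction lives.
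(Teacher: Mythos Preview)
Your opening moves are sound and match the paper: normalize so that $y$ corresponds to the identity of $GL_d$, take the scheme-theoretic closure $\overline{Y}\subset GL_{d,X}$, and observe that $G'$ (the closure of $G$ in $GL_{d,R}$) acts on $\overline{Y}$ by flatness. The paper's $B'$ is exactly the coordinate ring of this closure.

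The genuine gap is your mechanism for achieving flatness. Invoking Lemma~\ref{lemmaScoppioTorsori} with the trivial $GL_{d,R}$-torsor and the subtorsor $G'_s\times X_s$ does \emph{not} cut the structure group down toward $G'$: the output is a torsor under $(GL_{d,R})^{G'_s}$, which is still a model of $GL_{d,K}$, not of $G$. Iterating keeps the generic fibre equal to $GL_{d,K}$ at every stage, so the sequence never approaches $G'$. Moreover, that lemma gives no control over where the modification of $X$ is centred (and its sharper conclusion about N\'eron blow ups applies only in the quasi-finite case, which $GL_{d,R}$ is not), so your claim that ``every blow up can be taken at $x_s$'' is unsupported. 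Finally, the assertion that $G'\times_R Y'\to Y'\times_{X'}Y'$ ``becomes an isomorphism once the base has been sufficiently modified'' is exactly what needs proving; flatness of both sides plus a generic isomorphism does not yield a global one.

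The paper's argument is a direct computation rather than an appeal to Lemma~\ref{lemmaScoppioTorsori}. Writing the defining equations $f_n$ of $\overline{Y}\subset GL_{d,X}$ in coordinates $(t_1,\dots,t_r,y_{ij})$ with the section $x$ at the origin, one splits each $f_n$ as
\[
\alpha_*(f_n)(y_{ij}) \;+\; \sum_{l} v_{nl}(y_{ij})\,g_{nl}(t_1,\dots,t_r),\qquad g_{nl}(0,\dots,0)=0.
\]
A single N\'eron blow up at $x_s$ is the substitution $t_\gamma=\pi t'_\gamma$; after $e$ such blow ups every monomial of every $g_{nl}$ is divisible by $\pi^e$. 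For $e$ large enough the rescaled equations reduce modulo $\pi$ to $\alpha_*(f_n)=0$, so the special fibre of the resulting $Y''$ is \emph{literally} $G'_s\times_k X'_s$. Faithful flatness of $Y''\to X'$ then follows from the fibrewise criterion, and a direct check (the paper writes down $\Psi'$ and its inverse in matrix coordinates) gives the torsor isomorphism. Termination is thus not via a decreasing invariant but by a one-shot choice of $e$ large enough to kill the $t$-dependent terms modulo $\pi$. This is the missing idea in your proposal: the point of centring the blow ups at $x_s$ is precisely that the equations over $x$ already cut out $G'$, and repeated blow up at $x_s$ forces the whole special fibre to agree with the fibre over $x$.
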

\proof
By assumption $X=Spec(A)$ is an affine scheme over $S=Spec(R)$ and we denote by $X_{\eta}=Spec(A_K)$ its generic fibre. 
The point $x$  corresponds to a $R$-ring morphism $\alpha:A\to R$ which, tensoring by $K$ over $R$, gives the $K$-morphism $\alpha_K:A_K\to K$, corresponding to $x_{\eta}$. Since we are assuming that $Y$ has a $K$-rational point $y:Spec(K)\to Y$  over  $x_{\eta}:Spec(K)\to X_{\eta}$ then in particular $Y_{x_{\eta}}=Spec (B\otimes_{A_K}K)\simeq G$  and if we set $C:= B\otimes_{A_K}K$ we can assume $G=Spec(C)$. Hence $C$ is a quotient of $B$ and we have the following commutative diagrams:
\begin{equation}\label{diagSigma}
\xymatrix{C & B \ar@{->>}[l]_(.4){q} \\ K \ar@{^{(}->}[u] & A_K\ar@{^{(}->}[u] \ar@{->>}[l]_(.4){\alpha_K}} \qquad \xymatrix{B\ar[r]^(.4){\rho_B} \ar@{->>}[d]_{q}& C\otimes_K B \ar@{->>}[d]^{id_C\otimes q}\\ C \ar[r]^(.4){\Delta_C} & C\otimes_K C}
\end{equation}
where $\Delta_C$ is the comultiplication of the $K$-Hopf algebra $C$ and $\rho_B$ is the coaction induced by the (right) action of $\sigma:Y\times G\to Y$ thus giving $B$ a structure of (left) comodule over $C$. Finally $q$ is the morphism induced by the closed immersion $G\hookrightarrow Y$ and we will denote by $\varepsilon_C:C\to K$ and $S_C:C\to C$, respectively, the counit and the coinverse morphisms of $C$.  Now consider  the surjective morphism of $A_K$-algebras induced by the closed immersion of $Y$ into the trivial $GL_{d,X_{\eta}}$-torsor:
$$u:A_K[y_{11}, ... , y_{dd},1/det[y_{ij}]]\to B$$
 then if we identify $B$ with the  quotient $A_K[y_{11}, ... , y_{dd},1/det[y_{ij}]]$ by $ker(u)$  and we  take, via $\alpha_K$, the tensor product over $K$, we obtain

\begin{equation}\label{eqArray}
\begin{array}{cc}

B=&\frac{A_K[y_{11}, ... , y_{dd},1/det[y_{ij}]]}{f_1, ..., f_s}\\ & \\
C= &\frac{K[x_{11},  ..., x_{dd},1/det[x_{ij}]]}{{\alpha_K}_*(f_1), ..., {\alpha_K}_*(f_s)}, \qquad q:y_{ij} \mapsto x_{ij}

\end{array}
\end{equation} For each $i=1,...,s$ we  assume that the polynomials  $f_i$  have coefficients in $A$  
.Consequently the $\alpha_*(f_i)$ have coefficients in $R$. 

From the comultiplication on $Z$ (i.e. $\Delta_Z(y_{ij})=\sum_{r=1}^d y_{ir}\otimes y_{rj}$) we deduce:
\begin{equation}\label{eqFicus}
\rho_B(y_{ij})=\sum_{r=1}^d x_{ir} \otimes y_{rj}
\end{equation}
and consequently
\begin{equation}\label{eqFicus2}
\Delta_C(x_{ij})=\sum_{r=1}^d x_{ir} \otimes x_{rj}.
\end{equation}
Applying to the latter the equality $(\varepsilon_C\otimes id)\Delta_C=id$ and comparing coefficients we get

\begin{equation}\label{eqEpsilon}
\varepsilon_C(x_{ij})=\delta_{ij} 
\end{equation}
Moreover recalling that $\Delta_C(S_C,id)=\varepsilon_C$ we obtain 
$$\delta_{ij}=\sum_{r=1}^dS(x_{ir})x_{rj}$$ thus $S_C(x_{sr})$ is the $(s,r)$-th entry ($s$-th row, $r$-th column) in the $d\times d$ matrix $[x_{ij}]^{-1}$. In particular  $\Delta_C(1/(det[x_{ij}]))= 1/(det[x_{ij}])\otimes 1/(det[x_{ij}])$, since $\Delta_C(det[x_{ij}])= det[x_{ij}]\otimes det[x_{ij}]$.

The isomorphism given by $Y\times G\stackrel{\sim}{\longrightarrow} Y\times_{X_{\eta}}Y, (y,g)\mapsto (y,yg)$ gives rise to the isomorphism

\begin{equation}\label{eqIsoEddai}\Psi: B\otimes_{A_K}B
\stackrel{\sim}{\longrightarrow} C\otimes B\qquad y_{ij}\otimes y_{rs}\mapsto \rho(y_{ij})(1\otimes y_{rs})
\end{equation}

We are going to describe $\Psi^{-1}$. Since of course $\Psi^{-1}(1\otimes y_{ij})=(1\otimes y_{ij})$ it only remains to compute $\Psi^{-1}(x_{ij}\otimes 1)$. We claim that $$\Psi^{-1}(x_{ij}\otimes 1)=\sum_{r=1}^d y_{ir}\otimes H(y_{rj})$$ where, for all $(r,s)\in \{1,...,d\}^2$, $H(y_{rs})$ denotes the $(s,r)$-th entry ($s$-th row, $r$-th column) in the $d\times d$ matrix $[y_{ij}]^{-1}$. Indeed
$$\Psi\left(\sum_{r=1}^d y_{ir}\otimes H(y_{rj})\right)=\sum_{r=1}^d \rho(y_{ir})(1\otimes H(y_{rj}))=$$
$$=\sum_{r=1}^d\left(\sum_{s=1}^d x_{is}\otimes (y_{sr}H(y_{rj}))\right)=$$
$$=\sum_{s=1}^d\left(x_{is}\otimes \sum_{r=1}^d(y_{sr}H(y_{rj}))\right)=\sum_{s=1}^d\left(x_{is}\otimes \delta_{sj}\right)=x_{ij}\otimes 1.$$
Now it is important to observe that $H(y_{rj})=P\left(y_{11},...,y_{dd},\frac{1}{det[y_{ij}]}\right)\in \mathbb{Z}\left[y_{11},...,y_{dd},\frac{1}{det[y_{ij}]}\right]$ so in particular it has coefficients in $R$. So let us set 
\begin{equation}\label{eqBpri}
B':=  \frac{A[y_{11}, ... , y_{dd},1/det[y_{ij}]]}{f_1, ..., f_s}
\end{equation}

In order for $Spec(B')$ to be a torsor over $Spec(A)$ we need indeed $B'$ to be $A$-faithfully flat, so we divide the reminder of the proof in two steps: in the first we explain that if $B'$ is $A$-faithfully flat then $Spec(B')$ is a $Spec(C')$-torsor over $Spec(A)$, where $C':= B'\otimes_A R$; in the second we will describe how to always reduce to this situation up to N\'eron blow up the scheme $X$ in $x_s$, the special fibre of the $R$-valued point of $X$:\\
\newline

\underline{\emph{Step 1}: let us assume that $B'$ is $A$-faithfully flat}:

\noindent thus $C'= \frac{R[x_{11},  ..., x_{dd},1/det[x_{ij}]]}{\alpha_*(f_1), ..., \alpha_*(f_s)}$ is $R$-flat and it becomes a Hopf algebra over $R$ when provided with the comultiplication given by the restriction of $\Delta$ to $C'$:
$$\Delta_{C'}:C'\to C'\otimes C' \qquad x_{ij}\mapsto\sum_{r=1}^{d}x_{ir}\otimes  x_{rj}$$
the coinverse given by
$$S_{C'}:C'\to C' \qquad x_{ij}\mapsto S_{C'}(x_{ij})$$
where $S_{C'}(x_{rs})$ denotes the $(s,r)$-th entry in the matrix $[x_{ij}]^{-1}$, and finally the counity given by
$$\varepsilon_{C'}:C'\to R \qquad x_{ij}\mapsto \delta_{ij}.$$

Moreover $B'$ acquires a structure of (left) comodule over $C'$ when provided with the coaction given by
$$\rho_{B'}:B'\to C'\otimes_R B'\qquad y_{ij}\mapsto\sum_{r=1}^{d}x_{ir}\otimes  y_{rj}.$$
Furthermore the natural morphism 
\begin{equation}\label{eqPsiPrimo}\Psi': B'\otimes_{A}B'
\longrightarrow C'\otimes B'\qquad y_{ij}\otimes y_{rs}\mapsto \rho_{B'}(y_{ij})(1\otimes y_{rs}).\end{equation}
has an inverse given by 
\begin{equation}\label{eqPsiInv}\Psi^{'-1}:C'\otimes B'
\stackrel{\sim}{\longrightarrow} B'\otimes_{A}B'\qquad x_{ij}\otimes y_{uv} \mapsto \sum_{r=1}^d y_{ir}\otimes \left(H(y_{rj})y_{uv}\right)
\end{equation} and it is thus an isomorphism. Setting $G':=Spec(C')$ and $Y':=Spec(B')$ then $G'$ is a $R$-flat group scheme of finite type acting on $Y'$  such that $Y'\to X$ is a $G'$-invariant morphism. Finally inverting arrows in (\ref{eqPsiPrimo}) and (\ref{eqPsiInv}) we obtain the desired isomorphism $$Y'\times G'\stackrel{\sim}{\longrightarrow} Y'\times_{X}Y'$$ so, by definition,  $Y'\to X$ is a $G'$-torsor.\\

\underline{\emph{Step 2}: when $B'$ is not $A$-faithfully flat we N\'eron blow up $X$}:

First,  $A$ being of finite type over $R$, we can write $$A=R[t_1, ... , t_r]/u_1(t_1, ..., t_r), ... , u_m(t_1, ..., t_r)$$; so  we rewrite in a useful way equations (\ref{eqArray}):
\begin{equation}\label{eqArraySmart1}
B= \frac{K[t_1, ..., t_r,y_{11}, ... , y_{dd},1/det[y_{ij}]]}{u_1, ... , u_m,f_1, .., f_s}\end{equation}
where the $u_i=u_i(t_1, ..., t_r), i=1, ... , m$ and 

$f_n=f_n(t_1, ..., t_r,y_{11}, ... , y_{dd}, 1/det[y_{ij}]), n=1, ... , s$  are polynomials with coefficients in $K$. Chasing denominators if necessary we can assume that these  polynomials have coefficients in $R$ with at least one coefficient with valuation equal to $0$. Since $X$ is affine we can also assume, up to a translation, that the point $x\in X(R)$ is the origin so that for  $C$ we obtain the following description: 

\begin{equation}\label{eqArraySmart2}
C= \frac{K[x_{11}, ... , x_{dd},1/det[x_{ij}]]}{{\alpha}_*(f_1), .., {\alpha}_*(f_s)}
\end{equation} 

and moreover for every $n=1, ... , s$, $f_n(t_1, ..., t_r,y_{11}, ... , y_{dd},1/det[y_{ij}])$ can be rewritten as
\begin{equation}\label{eqArrayMiDisincasino}
{\alpha}_*(f_n)(y_{11}, ... , y_{dd},1/det[y_{ij}])+\sum_{l=1}^{L_n}v_{nl}(y_{11}, ... , y_{dd},1/det[y_{ij}])g_{nl}(t_1, ... , t_r)
\end{equation} 

for $L_n\in \mathbb{N}$, where $v_{nl}$ and $g_{nl}$ are polynomials with coefficients in $R$, by the above assumption, and $g_{nl}(0, ... ,  0)=0$. Hence we write $B'$ as follows
\begin{equation}\label{eqArraySmart3}
B'= \frac{R[t_1, ..., t_r,y_{11}, ... , y_{dd},1/det[y_{ij}]]}{u_1, ... , u_m,f_1, ... , f_s}\end{equation}
we can assume that $B'$ is $R$-flat, otherwise we can add other polynomials  $f_{s+1}, ... , f_{s'}$ in $R[t_1, ..., t_r,y_{11}, ... , y_{dd}, 1/det[y_{ij}]]$  cutting the $R$-torsion  (thus making it the only $R$-flat quotient of $A[y_{11}, ... , y_{dd},1/det[y_{ij}]]$  which is isomorphic to $B$ after tensoring with $K$ over $R$ (\cite{EGAIV-2} Lemme 2.8.1.1); finally $C':=B'\otimes_AR$ is as follows 

\begin{equation}\label{eqArraySmart4}
C'= \frac{R[x_{11}, ... , x_{dd},1/det[x_{ij}]]}{{\alpha}_*(f_1), .., {\alpha}_*(f_s)}
\end{equation} 

Now, let $e\in \mathbb{N}$ be a positive integer, we N\'eron blow up $e$ times $X$ in $x_s$, the special fibre of the point $x\in X(R)$ that we are assuming to be the origin. This is equivalent to the following construction: we set 
$$t'_{\gamma}:=\pi^{-e} t_{\gamma}, \qquad {\gamma}=1, ... , r$$ 
and  
$$A':=\frac{R[t'_1, ... , t'_r]}{u'_1(t'_1, ..., t'_r), ... , u'_m(t'_1, ..., t'_r)}$$
where $u'_i$ is obtained by $u_i$ replacing $t_{\gamma}$ with $\pi^{e}t_{\gamma}'$ and dividing it by a suitable power of $\pi$ so that the resulting polynomial has coefficients in $R$ with at least one with valuation zero. If we call $X':=Spec(A')$ then $X'$ is the desired N\'eron blow up of $X$ in $x_s\in X_s$ $e$ times. In a similar way from $B'$ we obtain the $R$-flat algebra $B''$

$$\frac{R[t'_1, ..., t'_r,y_{11}, ... , y_{dd},1/det[y_{ij}]]}{u'_1, ... , u'_m,\{{\alpha}_*(f_n)+\sum_{l=1}^{L_n}v_{nl}g'_{nl}\}'_{n=1, ..., s}}$$
where we have first  obtained $g'_{nl}$  by $g_{nl}$ replacing $t_{\gamma}$ with $\pi^{e}t_{\gamma}'$ and then we have divided by a suitable power of $\pi$ the polynomials ${\alpha}_*(f_n)+\sum_{l=1}^{L_n}v_{nl}g'_{nl}$  thus obtaining $\{{\alpha}_*(f_n)+\sum_{l=1}^{L_n}v_{nl}g'_{nl}\}'$  which now has coefficients in $R$ with at least one with valuation zero. We set $Y'':=Spec(B'')$ (it thus coincides, by construction, with the only closed subscheme of $Y'\times_X X'$ which is $R$-flat and generically isomorphic to $Y$) and $G'':=Y''_x=Spec(C'')$ where $$C''=\frac{R[x_{11}, ... , x_{dd},1/det[x_{ij}]]}{\{{\alpha}_*(f_1)\}', ... , \{{\alpha}_*(f_s)\}'}.$$ For a sufficiently big $e$, the exponent of $\pi$ in the equations $t'_{\gamma}=\pi^{-e} t_{\gamma}$, we have 
$$Y''_s=Spec\left(\frac{k[t'_1, ..., t'_r,y_{11}, ... , y_{dd},1/det[y_{ij}]]}{u'_1, ... , u'_m,\{{\alpha}_*(f_1)\}', ... , \{{\alpha}_*(f_s)\}'} \right)$$
which is isomorphic to ${G''_s}_{X'_s}:={G''_s}\times_k{X'_s}$ and thus faithfully flat over $X'_s$ (note that $G''$ being contained in $GL_{d,R}$ always contains a section and it is thus surjective over $Spec(R)$). By the already mentioned \emph{crit\`ere de platitude par fibres} it follows that $Y''\to X'$ is faithfully flat too (and consequently $G''\to Spec(R)$ is flat)  and this concludes the proof.
\endproof
The proof is made in such a way that we always find a model, for the given torsor, which is trivial on the special fibre. However this has only been made for computational purposes. It is clear that in the proof we may N\'eron blow up too much; so in order to obtain a less trivial model we need to blow up a smaller amount of time. Indeed we can chose the \emph{first} $X'$ where the given torsor has a model. This is in general not caught by the proof.

We now state and prove the main consequence of the previous lemma:

\begin{theorem}\label{TeoAffSpace}
Let $X=\mathbb{A}^n_R=Spec(R[x_1, ..., x_n])$ be the $n$-dimensional affine space and $x=(0, ..., 0)$ its origin. Let $G$ be an affine $K$-group scheme of finite type and $f:Y\to X_{\eta}$ a $G$-torsor pointed in $y\in Y_{x_{\eta}}(K)$. Then there exist a $R$-affine and flat group scheme $G'$, a $G'$-torsor $f':Y'\to  X'$, pointed in $y'\in Y_{x}(R)$, extending the given $G$-torsor $Y$, where $X'$ is obtained by $X$ after a finite number of N\'eron blow ups of $x_s\in X_s$.
\end{theorem}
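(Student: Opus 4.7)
The plan is to reduce the statement directly to Lemma \ref{lemEsisteModello}, the rest being a matter of verifying its hypotheses in the present setting. The affineness of $X = \mathbb{A}^n_R = Spec(R[x_1,\dots,x_n])$, the fact that the origin is an $R$-point, and the pointedness of $Y$ at $x_\eta$ are immediate from the theorem's hypotheses. The only genuine hypothesis to verify is the technical one: fix a closed embedding $G \hookrightarrow GL_{d,K}$ for some $d$, and show that the contracted product $Z := Y \times^G GL_{d,K}$ is trivial as a $GL_{d,K}$-torsor over $\mathbb{A}^n_K$.

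For the embedding, I would invoke the classical fact that every affine group scheme of finite type over a field admits a faithful finite-dimensional linear representation, and hence sits as a closed subgroup of some $GL_{d,K}$ (this is essentially the same ingredient already invoked in the introduction via \cite[\S 3.4]{WW}). With such an embedding fixed, $Z$ becomes a $GL_{d,K}$-torsor over the affine space $\mathbb{A}^n_K$; it is therefore classified, through the standard dictionary between $GL_d$-torsors and locally free sheaves of rank $d$, by a finitely generated projective module of rank $d$ over $K[x_1,\dots,x_n]$. The decisive input is now the Quillen--Suslin theorem (Serre's conjecture), which asserts that every such module is free. Thus $Z$ is trivial, and this is the only step where the specific form of the base, namely $\mathbb{A}^n_K$, intervenes; it is also the step I expect to be the ``hard'' one in the sense that the whole theorem rests on it.

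Once the technical hypothesis is verified, Lemma \ref{lemEsisteModello} directly produces the desired data: the $R$-flat affine group scheme of finite type $G'$ (the schematic closure of $G$ in $GL_{d,R}$), a $G'$-torsor $f': Y' \to X'$ extending the given $G$-torsor $Y$, and the section $y' \in Y'_x(R)$ lifting $y$ (whose existence comes from the universal property of the N\'eron blow up, Proposition \ref{propNeronScoppio}(2), applied to the section $x$, together with the flat closure of $y$ in $Y'$). A brief side-remark, useful for Corollary \ref{CorAffSpa} and Conjecture \ref{Conj1} but not strictly needed for the statement as given, is that each N\'eron blow up of $\mathbb{A}^n_R$ at the origin of its special fibre is itself isomorphic to $\mathbb{A}^n_R$, via the substitution $x_i \mapsto \pi^{e} x_i$ already employed in Step 2 of the proof of Lemma \ref{lemEsisteModello}; so $X'$ is still an $n$-dimensional affine $R$-space, the identification with $X$ on the generic fibre being through an automorphism of $\mathbb{A}^n_K$.
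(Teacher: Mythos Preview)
Your proposal is correct and follows essentially the same approach as the paper: verify the technical hypothesis of Lemma~\ref{lemEsisteModello} by invoking Quillen--Suslin to trivialize the $GL_{d,K}$-torsor $Z$ over $\mathbb{A}^n_K$, and then apply the lemma. Your write-up is in fact more explicit than the paper's, spelling out the existence of the embedding $G\hookrightarrow GL_{d,K}$, the dictionary between $GL_d$-torsors and projective modules, and the origin of the pointed section $y'$; the side-remark about $X'\simeq\mathbb{A}^n_R$ is exactly the content of Lemma~\ref{lemAffSpa}.
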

\begin{proof}
By Quillen-Suslin theorem it is known that every $GL_{n,\mathbb{A}^n_K}$-torsor is trivial, then we apply Lemma \ref{lemEsisteModello}.
\end{proof}

\begin{lemma}\label{lemAffSpa} Let $X=\mathbb{A}^n_R$  be the $n$-dimensional affine space and $x=(0, ..., 0)$ its origin as before. Let again $X'$ be the $R$-scheme obtained by $X$ after a finite number of N\'eron blow ups of $x_s\in X_s$. Then there exists a $R$-isomorphism $\mathbb{A}^n_R\to X'$.
\end{lemma}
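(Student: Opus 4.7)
The plan is to compute the Néron blow-up of $\mathbb{A}^n_R$ at the origin of its special fibre explicitly, observe that it is already isomorphic to $\mathbb{A}^n_R$, and then iterate.

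First I would identify the ideal sheaf: the closed point $x_s \hookrightarrow X$ corresponds to the ideal $\mathcal{I} = (\pi, x_1, \ldots, x_n) \subset R[x_1, \ldots, x_n]$. By Proposition \ref{propNeronScoppio}, the Néron blow-up $X^{x_s}$ is the open subscheme of the ordinary blow-up of $X$ along $\mathcal{I}$ on which $\mathcal{I}\cdot \mathcal{O}_{X^{x_s}} = (\pi)$. On this chart $x_i \in \pi\cdot \mathcal{O}_{X^{x_s}}$ for each $i$, so setting $y_i := x_i/\pi$ inside the fraction field $K[x_1,\ldots,x_n]$ one computes
$$X^{x_s} \;=\; Spec\bigl(R[x_1,\ldots,x_n,y_1,\ldots,y_n]/(x_i-\pi y_i)_{i=1}^{n}\bigr) \;=\; Spec\bigl(R[y_1,\ldots,y_n]\bigr),$$
with the structure morphism $X^{x_s}\to X$ given by $x_i\mapsto \pi y_i$. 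In particular $X^{x_s}\simeq \mathbb{A}^n_R$ as $R$-schemes.

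Next I would verify that iteration reduces to the same situation. By part (2) of Proposition \ref{propNeronScoppio}, the section $x\in X(R)$ lifts uniquely to a section $x'\in X^{x_s}(R)$; since $y_i = x_i/\pi$ in the fraction field and $x$ sends each $x_i$ to $0\in R$, the lift $x'$ sends each $y_i$ to $0$, so $x'$ is once again the origin of $\mathbb{A}^n_R$ in the coordinates $y_1,\ldots,y_n$, and $x'_s$ is the origin of its special fibre. An easy induction on the number $e$ of successive blow-ups then yields the claim.

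There is essentially no obstacle: the whole statement is a one-chart computation of the blow-up of affine space at a smooth point of its special fibre, the only point that needs a moment of care being the verification that the chart described is indeed the Néron chart --- but this is immediate since on it $\mathcal{I}\cdot \mathcal{O}_{X^{x_s}} = (\pi, \pi y_1,\ldots,\pi y_n) = (\pi)$. As a bonus, composing the $e$ successive blow-up maps, one reads off that the resulting $R$-isomorphism $X'\simeq \mathbb{A}^n_R$, followed by the model map $X'\to X$, is given in suitable coordinates by $x_i\mapsto \pi^{e} z_i$; on the generic fibre this is precisely an automorphism of $\mathbb{A}^n_K$, which is exactly the flexibility exploited in the statement of Theorem \ref{Teo1}.
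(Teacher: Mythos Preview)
Your argument is correct and follows essentially the same route as the paper: both compute the single N\'eron blow-up explicitly as $Spec\bigl(R[x_1,\ldots,x_n,y_1,\ldots,y_n]/(x_i-\pi y_i)\bigr)\simeq Spec(R[y_1,\ldots,y_n])$ and then iterate, arriving at the composite map $x_i\mapsto \pi^{e}y_i$. Your version is in fact a bit more careful than the paper's (you check that the chart described is indeed the N\'eron chart and that the lifted section is again the origin), but the content is the same.
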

\begin{proof}
This is well known, at least for $n=1$, (see for instance \cite{WWO}, proof of Theorem 2.2) but for any $n$ the proof is very similar: we compute $X'$ after $m\in \mathbb{N}$ N\'eron blow ups: if $m=1$ then  $$X'=Spec\left( R[x_1, ..., x_n, y_1, ..., y_n ]/ (x_1-\pi y_1,..., x_n-\pi y_n )\right).$$ 
Hence if $m$ is any natural integer  $$X'=Spec\left( R[x_1, ..., x_n, y_1, ..., y_n ]/ (x_1-\pi^m y_1,..., x_n-\pi^m y_n )\right).$$ The morphism  $$X'\to \mathbb{A}^n_R:=Spec(R[t_1, ..., t_n]) ,\quad t_i\mapsto y_i $$ gives the desired isomorphism. 
\end{proof}

This allows us to improve Theorem \ref{TeoAffSpace} as follows

\begin{corollary}\label{CorAffSpa}Let $X=\mathbb{A}^n_R$  be the $n$-dimensional affine space and $x=(0, ..., 0)$ its origin as before. Let $G$ be an affine $K$-group scheme of finite type and $f:Y\to X_{\eta}$ a $G$-torsor pointed in $y\in Y_{x_{\eta}}(K)$. Then, possibly after pulling back $Y$ over an automorphism of $\mathbb{A}^n_K$, there exist a $G'$-torsor $f':Y'\to \mathbb{A}^n_R$, pointed in $y'\in Y_{x}(R)$, extending the given $G$-torsor $Y$.
\end{corollary}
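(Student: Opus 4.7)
The plan is to combine Theorem \ref{TeoAffSpace} and Lemma \ref{lemAffSpa} directly; the corollary is essentially a repackaging that absorbs the Néron blow up into an automorphism of the generic fibre.

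First, apply Theorem \ref{TeoAffSpace} to the given pointed $G$-torsor $f:Y\to X_\eta$. This yields a flat affine $R$-group scheme $G'$ and a pointed $G'$-torsor $f':Y'\to X'$ extending $Y$, where $X'$ is obtained from $X=\mathbb{A}^n_R$ by a finite sequence of Néron blow ups at the origin $x_s$ of the special fibre.

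Next, invoke Lemma \ref{lemAffSpa} to fix an $R$-isomorphism $\varphi:\mathbb{A}^n_R\xrightarrow{\ \sim\ }X'$. The composition
\[
\sigma \;:\; \mathbb{A}^n_R \xrightarrow{\ \varphi\ } X' \xrightarrow{\ u\ } X = \mathbb{A}^n_R,
\]
where $u$ is the structural (model) map of the blow up tower, is a model map of $R$-schemes, sending the origin to the origin. On the generic fibre $\sigma_\eta:\mathbb{A}^n_K\to \mathbb{A}^n_K$ is an automorphism (since $u_\eta$ is an isomorphism and $\varphi_\eta$ is an isomorphism); the explicit form read off from the proof of Lemma \ref{lemAffSpa} is $t_i\mapsto \pi^m t_i$ for some $m\in\mathbb{N}$, which indeed fixes the origin $x_\eta=(0,\dots,0)$.

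Finally, pull back $f':Y'\to X'$ along $\varphi$. Since $\varphi$ is an $R$-isomorphism, $\varphi^*Y'\to \mathbb{A}^n_R$ is a $G'$-torsor, and the $R$-point $y'$ pulls back to an $R$-point of $(\varphi^*Y')_x$ lying over the origin. On the generic fibre, this torsor is canonically identified with $\sigma_\eta^{\,*}Y\to \mathbb{A}^n_K$, i.e.\ the pullback of the original torsor $Y$ along the automorphism $\sigma_\eta$ of $\mathbb{A}^n_K$. Replacing $Y$ by $\sigma_\eta^{\,*}Y$ (which amounts to the pullback ``over an automorphism of $\mathbb{A}^n_K$'' allowed by the statement), we obtain the desired pointed extension $f':Y'\to\mathbb{A}^n_R$, completing the proof.

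There is essentially no serious obstacle: all the work (existence of the extension over some blow up tower, and the fact that the blow up tower is itself isomorphic to $\mathbb{A}^n_R$) has been carried out in the preceding results. The only thing worth verifying carefully is that the automorphism $\sigma_\eta$ fixes the origin so that the pointed structure is preserved, which is immediate from the explicit formula $t_i\mapsto \pi^m t_i$.
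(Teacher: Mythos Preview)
Your proposal is correct and follows exactly the paper's approach: the paper's own proof is the single sentence ``This is an immediate consequence of Theorem \ref{TeoAffSpace} and Lemma \ref{lemAffSpa},'' and you have simply unpacked this by making explicit the isomorphism $\varphi$, the resulting automorphism $\sigma_\eta$ of $\mathbb{A}^n_K$, and the verification that the origin is preserved. There is nothing to add.
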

\proof This is an immediate consequence of Theorem \ref{TeoAffSpace} and Lemma \ref{lemAffSpa}. 
\endproof

\begin{remark}
It is worth observing that one can chose any $R$-point of $\mathbb{A}^n_R$ and move it to the origin with a simple translation.  
\end{remark}

Of course Lemma \ref{lemEsisteModello} has other interesting applications not yet stated, for instance it can be applied to the spectrum of local rings. As an example (among many) we mention $X=Spec(R[x]/x^n)$, which becomes interesting when $n=p=char(R)$ and $X$ is nothing but $\alpha_{p,R}$ where we forget the group structure. We state in this case an analog of Corollary \ref{CorAffSpa}: 

\begin{corollary}\label{CorAffSpa2}Let $X=\alpha_{p,R}$ and $x$ its identity element. Let $G$ be an affine $K$-group scheme of finite type and $f:Y\to X_{\eta}$ a $G$-torsor pointed in $y\in Y_{x_{\eta}}(K)$. Then, possibly after pulling back $Y$ over an automorphism (as scheme) of $\alpha_{p,K}$, there exist a $G'$-torsor $f':Y'\to \alpha_{p,R}$, pointed in $y'\in Y_{x}(R)$, extending the given $G$-torsor $Y$.
\end{corollary}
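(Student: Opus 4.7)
The plan is to adapt the proof of Corollary \ref{CorAffSpa} to $X = \alpha_{p,R} = Spec(R[x]/(x^p))$ with its canonical section at the identity $x = 0$. First, to apply Lemma \ref{lemEsisteModello} I need the auxiliary torsor $Z := Y \times^G GL_{d,K}$ over $X_\eta$ to be trivial. This holds because $X_\eta = Spec(K[x]/(x^p))$ is the spectrum of an artinian local ring, so every finitely generated projective module over $K[x]/(x^p)$ is free and consequently every $GL_{d,X_\eta}$-torsor is trivial. Lemma \ref{lemEsisteModello} then produces a $G'$-torsor $Y' \to X'$ extending $Y$, where $X'$ is obtained from $\alpha_{p,R}$ by a finite number of N\'eron blow-ups at $x_s \in X_s$.

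The key geometric input, replacing Lemma \ref{lemAffSpa}, is that the N\'eron blow-up of $\alpha_{p,R}$ at its origin returns $\alpha_{p,R}$ again, up to rescaling the coordinate by $\pi$. Explicitly, the ideal of $x_s$ in $R[x]/(x^p)$ is $(\pi, x)$, and in the affine chart of the blow-up where $\pi$ generates we set $y = x/\pi$ to get the presentation $R[y]/(\pi^p y^p)$. A direct verification shows that the $\pi^\infty$-torsion submodule of this ring is exactly the image of the ideal $(y^p)$ (every $f \in (y^p)$ is killed by $\pi^p$, and conversely $\pi^n f \in (\pi^p y^p) \cdot R[y]$ forces $f \in (y^p)$ by comparing $y$-degrees and using that $R$ is a DVR), so the $R$-flat quotient demanded by the N\'eron blow-up construction is $R[y]/(y^p) \cong \alpha_{p,R}$, with induced map $x \mapsto \pi y$. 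Iterating $m$ times produces an isomorphism $X' \cong \alpha_{p,R}$ together with the structural morphism $X' \to X$ given by $x \mapsto \pi^m y$.

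On generic fibres this is the $K$-algebra map $K[x]/(x^p) \to K[y]/(y^p)$ sending $x$ to $\pi^m y$, which is an isomorphism since $\pi^m$ is a unit in $K$; it therefore corresponds to a scheme automorphism of $\alpha_{p,K}$. Hence, after identifying $X'$ with $\alpha_{p,R}$, the torsor $Y'$ is precisely an $\alpha_{p,R}$-extension of the pullback of $Y$ along this automorphism, and its distinguished $R$-point sits above the origin by construction. I expect the only nontrivial step to be the flat-quotient computation identifying the blow-up with $\alpha_{p,R}$; once this is established, the remainder of the argument is a direct transcription of the proofs of Theorem \ref{TeoAffSpace} and Corollary \ref{CorAffSpa}.
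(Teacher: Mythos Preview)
Your proof is correct and follows exactly the approach the paper intends: the paper does not spell out a proof of this corollary but only says that Lemma~\ref{lemEsisteModello} ``can be applied to the spectrum of local rings'' and that the statement is an ``analog of Corollary~\ref{CorAffSpa}'', so you have filled in precisely the two missing ingredients---triviality of $GL_d$-torsors over the local ring $K[x]/(x^p)$, and the computation (analogous to Lemma~\ref{lemAffSpa}) that the N\'eron blow-up of $\alpha_{p,R}$ at the closed point of its special fibre is again $\alpha_{p,R}$ via $x\mapsto \pi y$. Your verification of the $\pi$-torsion in $R[y]/(\pi^p y^p)$ is correct and is indeed the one place where some care is needed.
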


\end{document}